\documentclass[a4paper, notitlepage, 12pt]{article}
%%%%%%%%%%%%%%%%%%%%%%%%%%%%%%%%%%%%%%%%%%%%%%%%%%%%%%%%%%%%%%%%%%%%%%%%%%%%%%%%%%%%%%%%%%%%%%%%%%%%%%%%%%%%%%%%%%%%%%%%%%%%
\usepackage{amssymb}
\usepackage{amsfonts}
\usepackage{amsmath}
\usepackage{indentfirst}
\usepackage{theorem}

\setcounter{MaxMatrixCols}{10}
%TCIDATA{OutputFilter=LATEX.DLL}
%TCIDATA{Version=4.00.0.2312}
%TCIDATA{Created=Wednesday, March 24, 2004 14:46:48}
%TCIDATA{LastRevised=Friday, May 23, 2008 21:02:44}
%TCIDATA{<META NAME="GraphicsSave" CONTENT="32">}
%TCIDATA{<META NAME="DocumentShell" CONTENT="Standard LaTeX\Standard LaTeX Article">}
%TCIDATA{Language=American English}
%TCIDATA{CSTFile=40 LaTeX article.cst}

\textwidth 18cm \textheight 24cm \oddsidemargin 0cm \topmargin 0cm
\headheight 0cm \headsep 0cm {\theorembodyfont{\upshape}
\newtheorem{remark}{Remark}[section]

}
\newtheorem{theorem}{Theorem}[section]
\newtheorem{corollary}{Corollary}[section]
\newtheorem{definition}{Definition}[section]
\newtheorem{lemma}{Lemma}[section]

\newenvironment{proof}[1][Proof]{\noindent\textbf{#1.} }{\ \rule{0.5em}{0.5em}}

\makeatletter \@addtoreset{equation}{section}
\renewcommand{\theequation}{\thesection.\@arabic\c@equation}
\@addtoreset{figure}{section}
\renewcommand{\thefigure}{\thesection.\@arabic\c@figure}
\@addtoreset{table}{section}
\renewcommand{\thetable}{\thesection.\@arabic\c@table}
\makeatother
\input{tcilatex}

\begin{document}

\title{An Intrinsic Impulse Observability Criterion for Descriptor System
\thanks{%
Supported by Harbin Institute of Technology Science Foundation under grant
HITC200712.}}
\author{Zhibin Yan \\
%EndAName
{\small Center for control and Guidance,\ Harbin Institute of Technology,\
Harbin, 150001, China}\\
{\small e-mail: zbyan@hit.edu.cn}}
\date{}
\maketitle

\begin{abstract}
Analyzing the order of unobservable impulse in descriptor system leads to a
new testing criterion for impulse observability, both the statement and the
proof of which use only the original system data.

\textit{Keywords}: singular linear system; observability at infinity;
impulse observability; Dirac delta distribution

\textit{AMS Subject Classifications}: 93B05; 93B07; 93B10
\end{abstract}

\section{Introduction\label{SecOne}}

We consider the impulse observability of descriptor linear system \cite%
{Lewis1986}, \cite{Dai1989}
\begin{equation}
\begin{array}{rll}
E\dot{x}(t) & = & Ax(t) \\
y(t) & = & Cx(t)%
\end{array}
\label{Sys3}
\end{equation}%
where $E,$ $A\in {\mathbb{R}}^{n\times n},$ $C\in {\mathbb{R}}^{m\times n}.$
Matrix $E$ is singular, but matrix pencil $sE-A$ is regular, i.e., $\det
(sE-A)$ is a nonzero polynomial on $s\in \mathbb{C}$ \cite{Gantmacher}.
Comparing with standard linear system ($E=I,$ the identity matrix),
descriptor one is featured by having impulse behavior. The underlying
mathematics for this phenomenon is that the following initial value problem
of differential-algebraic equation
\begin{equation}
E\dot{x}(t)=Ax(t),\,t\geq 0;\quad x(0)=w  \label{initialvalueproblem}
\end{equation}%
has no solution generally in the sense of classical differentiable function,
and a generalized solution, as a mathematical model of the state response of
the system to initial value, in the sense of distribution has to be adopted.
It interprets the impulse behavior that the distributional solution may
contain a linear combination of Dirac delta distribution $\delta (t)$\ and
its distributional derivatives $\delta ^{(k)}(t),$ $k=1,2,\ldots $. This
linear combination is called impulsive term in the state response for
convenience in following. For details, see \cite[pp. 16--22]{Dai1989} and
the references therein. More recent works about distributional solution are
\cite{Muller2005}--\cite{Yan2008}.

System (\ref{Sys3}) is called impulse observable (see, e.g., \cite[p. 43]%
{Dai1989}, \cite{Ozccaldiran1992}--\cite{Ishihara2001}, \cite{Yan2006}), if
for arbitrarily given but unknown initial value $x(0),$ the impulsive term
in state response $x(t),$ $t\geq 0$ can be uniquely determined out from the
measured, therefore known, output response $y(t),$ $t\geq 0.$

A subtle point for impulse analysis in descriptor system is that explicit
expressions of the impulsive terms in state and output responses directly
using the system data, i.e., $\{E,A,C\}$ for system (\ref{Sys3}) discussed
here, do not exist. As a result, the existing impulse observability
criteria, mainly collected in \cite[Theorem 2-3.4, p. 43 ]{Dai1989} and the
dual form of \cite[Theorem 4.2, p. 20]{Lewis1986}, and/or their proofs have
to employ indirect data which comes from some transformations to the
original system, e.g., slow-fast decomposition based on Weierstrass
canonical form, descriptor system structure algorithm \cite{Lewis1986}, etc.
Besides causing numerical difficulties, to deal with which some condensed
forms based on numerically reliable orthogonal matrix transformations are
developed \cite{Mehrmann1999}--\cite{Mehrmann2008}, such type of answer to a
control theory problem can not satisfy a theoretical interest as well.

In this note, we are interested in obtaining an \textbf{intrinsic} impulse
observability criterion in the sense that both its statement and its proof
use the original system data directly. This concern is also motivated by
seeking for a criterion with clear dynamical interpretation. The obtained
new criterion is Theorem \ref{ThNewCri}, the dynamical interpretation of
which is the nonexistence of unobservable impulse of specific order, see
Theorem \ref{ThRjieCri}. Main existing criteria for impulse observability in
literature are special cases of our new one, see Corollaries \ref{FirstNew}
and \ref{SecondNew}.

\section{Main Results}

The frequency domain form of (\ref{Sys3}) with initial state $x(0)=w$ can be
written as
\begin{equation}
\left[
\begin{array}{c}
sE-A \\
C%
\end{array}%
\right] X(s)=\left[
\begin{array}{c}
Ew \\
Y(s)%
\end{array}%
\right] .  \label{Sys4}
\end{equation}%
where $X,Y$ denote the Laplace transforms of $x,y$ respectively.

We use notations ${\mathbb{R}}^{n}(s)$ and ${\mathbb{R}}^{n}[s]$ to denote
the set of $n$-dimensional rational fraction vectors and the set of $n$%
-dimensional polynomial vectors respectively. Note that any rational
fraction can be uniquely expressed as the sum of a strictly proper fraction
and a polynomial. \textbf{In following for }$F(s)\in {\mathbb{R}}^{n}(s)$%
\textbf{\ we always use }$F_{\mathrm{A}}(s)$\textbf{\ and }$F_{\mathrm{P}%
}(s) $\textbf{\ to denote the strictly proper fraction part and polynomial
part respectively for notation convenience.}

For clarity we use the following Definition \cite{Yan2007}.

\begin{definition}
\label{ImpulseOrder}Let $v_{i}\in {\mathbb{R}}^{n},$ $i=0,1,\ldots ,p$ with $%
v_{p}\neq 0.$ Then the distribution%
\begin{equation*}
\lambda (t)=\delta (t)v_{0}+\delta ^{(1)}(t)v_{1}+\cdots +\delta
^{(p)}(t)v_{p}
\end{equation*}%
is called an $n$-dimensional impulse of order $p,$ denoted by $\deg (\lambda
)=p.$ The polynomial vector $\tsum\nolimits_{i=0}^{p}s^{i}v_{i}$ $\in $ ${%
\mathbb{R}}^{n}[s]$ in complex variable $s$ of degree $p,$ which is Laplace
transform of impulse, is called frequency domain form of impulse, or simply,
impulse.
\end{definition}

The Laplace inverse transform of a strictly proper fraction is a usual
smooth function (a combination of exponential, triangular and polynomial
functions of time variable $t.$ see \cite{Oppenheim2002}). Therefore impulse
observability of the system (\ref{Sys3}) means, in frequency domain
language, that the polynomial part $X_{\mathrm{P}}(s)$ in $X(s)$ can be
uniquely determined out from $Y(s)$. We write this fact as definition of
impulse observability for clarity, although \textquotedblleft
common\textquotedblright\ definition is not so stated \cite[p. 43]{Dai1989},
\cite{Dai1989b}, \cite{Ozccaldiran1992}, \cite{Hou1999}, \cite{Ishihara2001}%
, \cite{Yan2006}.

\begin{definition}
\label{Lemmacri}The system (\ref{Sys3}) is impulse observable, if the
following equation%
\begin{equation}
\left[
\begin{array}{c}
sE-A \\
C%
\end{array}%
\right] X(s)=\left[
\begin{array}{c}
Ew \\
0%
\end{array}%
\right]  \label{eqfrac}
\end{equation}%
on $(w,$ $X(s))\in {\mathbb{R}}^{n}\times {\mathbb{R}}^{n}(s)$ has no
solution with $X_{\mathrm{P}}(s)$ nonzero.
\end{definition}

\begin{remark}
In other words, from $Y(s)=CX(s)=0$ one can conclude that $X(s)$ does not
contain polynomial part (frequency domain form of impulse, see Definition %
\ref{ImpulseOrder}).
\end{remark}

\begin{remark}
We adopt the equation viewpoint. Here a deliberate point is that both $w$
and $X(s)$ are seen undetermined simultaneously. It is this insight that
results in an equal treating with $p_{-1}$ and$\ p_{0},\cdots ,p_{r}$ in (%
\ref{EqOhCN})\ and the introducing of the matrix (\ref{ImObserMatrixK}).
\end{remark}

\begin{theorem}
\label{Theoerempoly}The system (\ref{Sys3}) is not impulse observable, if
and only if the following equation%
\begin{equation}
\left[
\begin{array}{c}
sE-A \\
C%
\end{array}%
\right] P(s)=\left[
\begin{array}{c}
Ev \\
0%
\end{array}%
\right]  \label{equnobserv}
\end{equation}%
on $(v,$ $P(s))\in {\mathbb{R}}^{n}\times {\mathbb{R}}^{n}[s]$ has a
solution with $P(s)$ nonzero.
\end{theorem}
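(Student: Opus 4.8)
The plan is to prove both directions through the unique decomposition of a rational vector into its strictly proper part $F_{\mathrm{A}}(s)$ and its polynomial part $F_{\mathrm{P}}(s)$, as recalled before Definition \ref{ImpulseOrder}; the one subtle point is that multiplication by $s$ transfers a residue out of the strictly proper part and into the polynomial part.

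The \emph{if} direction is immediate. If a nonzero $P(s)\in{\mathbb{R}}^{n}[s]$ solves (\ref{equnobserv}) together with some $v\in{\mathbb{R}}^{n}$, then taking $w=v$ and $X(s)=P(s)$ gives a solution of (\ref{eqfrac}) whose polynomial part is $X_{\mathrm{P}}(s)=P(s)\neq 0$. By Definition \ref{Lemmacri} the system is not impulse observable.

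For the \emph{only if} direction, suppose the system is not impulse observable, so by Definition \ref{Lemmacri} equation (\ref{eqfrac}) admits a solution $(w,X(s))$ with $X_{\mathrm{P}}(s)\neq 0$. I would write $X(s)=X_{\mathrm{A}}(s)+X_{\mathrm{P}}(s)$ and expand the strictly proper part at infinity as $X_{\mathrm{A}}(s)=w_{1}s^{-1}+w_{2}s^{-2}+\cdots$ with $w_{i}\in{\mathbb{R}}^{n}$. Substituting into the first block row $(sE-A)X(s)=Ew$ and separating polynomial from strictly proper parts, I would use that $AX_{\mathrm{A}}(s)$ is strictly proper while $sEX_{\mathrm{A}}(s)=Ew_{1}+(\text{strictly proper})$, so the polynomial part of the left-hand side equals $(sE-A)X_{\mathrm{P}}(s)+Ew_{1}$. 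Matching it against the constant $Ew$ yields $(sE-A)X_{\mathrm{P}}(s)=E(w-w_{1})$. The second block row $CX(s)=0$ splits the same way, and since $CX_{\mathrm{A}}(s)$ is strictly proper its polynomial part gives $CX_{\mathrm{P}}(s)=0$. Setting $P(s):=X_{\mathrm{P}}(s)$ and $v:=w-w_{1}$ then produces a solution of (\ref{equnobserv}) with $P(s)\neq 0$.

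The step that needs care, and the main obstacle, is the bookkeeping of the constant term $Ew_{1}$ generated when $s$ multiplies the strictly proper part $X_{\mathrm{A}}(s)$. Overlooking this coupling would leave the polynomial-part equation unclosed, since one must introduce the free vector $v$ (rather than keep $w$ unchanged) precisely to absorb this residue. This is exactly the equal treatment of the initial value and the impulse coefficients anticipated in the second Remark.
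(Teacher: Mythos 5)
Your proof is correct and follows essentially the same route as the paper: the sufficiency via the inclusion ${\mathbb{R}}^{n}[s]\subset{\mathbb{R}}^{n}(s)$, and the necessity via the unique strictly-proper-plus-polynomial decomposition, where your leading Laurent coefficient $w_{1}$ of $X_{\mathrm{A}}(s)$ is exactly the paper's $q=\lim_{s\rightarrow\infty}sX_{\mathrm{A}}(s)$, and your $v=w-w_{1}$ matches the paper's $w-q$. The ``bookkeeping'' point you flag is precisely the step the paper handles by writing $Ew=[(sE-A)X_{\mathrm{A}}(s)-Eq]+[Eq+(sE-A)X_{\mathrm{P}}(s)]$ and invoking uniqueness of the decomposition.
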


Note the difference of ${\mathbb{R}}^{n}(s)$ and ${\mathbb{R}}^{n}[s]$.

\begin{proof}
The sufficiency is obvious by Definition \ref{Lemmacri} since $P(s)$ $\in $ $%
{\mathbb{R}}^{n}[s]$ $\subset $ ${\mathbb{R}}^{n}(s).$

Necessity. By Definition \ref{Lemmacri}, there exists $(w,$ $X(s))\in {%
\mathbb{R}}^{n}\times {\mathbb{R}}^{n}(s)$ with $X_{\mathrm{P}}(s)$ nonzero
such that%
\begin{equation*}
\left[
\begin{array}{c}
sE-A \\
C%
\end{array}%
\right] (X_{\mathrm{A}}(s)+X_{\mathrm{P}}(s)=\left[
\begin{array}{c}
Ew \\
0%
\end{array}%
\right] .
\end{equation*}%
Firstly, $0=CX_{\mathrm{A}}(s)+CX_{\mathrm{P}}(s)\in {\mathbb{R}}^{m}(s)$
implies
\begin{equation}
CX_{\mathrm{P}}(s)=0.  \label{ProofEq1}
\end{equation}%
Secondly, the limit $\lim_{s\rightarrow \infty }sX_{\mathrm{A}}(s)$ exists,
denoted by $q,$ and moreover $\lim_{s\rightarrow \infty }(sE-A)X_{\mathrm{A}%
}(s)$ $=$ $Eq.$ Therefore
\begin{eqnarray*}
Ew &=&(sE-A)(X_{\mathrm{A}}(s)+X_{\mathrm{P}}(s) \\
&=&[(sE-A)X_{\mathrm{A}}(s)-Eq]+[Eq+(sE-A)X_{\mathrm{P}}(s)]
\end{eqnarray*}%
forms a decomposition of strictly proper fraction plus polynomial. The
uniqueness of such decomposition implies $Ew$ $=$ $[Eq+(sE-A)X_{\mathrm{P}%
}(s)],$ which is equivalent to%
\begin{equation}
(sE-A)X_{\mathrm{P}}(s)=E(w-q).  \label{ProofEq2}
\end{equation}%
It follows from (\ref{ProofEq1}) and (\ref{ProofEq2}) that $(w-q,$ $X_{%
\mathrm{P}}(s))\in {\mathbb{R}}^{n}\times {\mathbb{R}}^{n}[s]$ is a solution
of (\ref{equnobserv}).
\end{proof}

We introduce the following technical notion.

\begin{definition}
\label{Defrorder}Let $(v,$ $P(s))$ $\in $ ${\mathbb{R}}^{n}$ $\times $ ${%
\mathbb{R}}^{n}[s]$ with $P(s)$ nonzero and $\deg (P(s))=r$ be a solution of
the equation (\ref{equnobserv}). Then $v$ is called an unobservable
impulsive initial state of the system (\ref{Sys3}), and $P(s)$ is called an
unobservable impulse of order $r$ of the system (\ref{Sys3}).
\end{definition}

\begin{lemma}
\label{propnorder}The system (\ref{Sys3}) is impulse observable, if and only
if it has no unobservable impulse of order $\leq n-1.$
\end{lemma}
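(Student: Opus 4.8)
The plan is to combine the polynomial characterization of Theorem~\ref{Theoerempoly} with a degree bound showing that an unobservable impulse, if one exists at all, can always be found of order at most $n-1$. One implication is immediate: if the system is impulse observable then, by Theorem~\ref{Theoerempoly}, equation~(\ref{equnobserv}) has no nonzero polynomial solution whatsoever, so in particular it has no unobservable impulse of order $\le n-1$. The substance of the lemma is the converse, which via Theorem~\ref{Theoerempoly} reduces to the single claim that every unobservable impulse of system~(\ref{Sys3}) has order at most $n-1$. Granting this, if the system fails to be impulse observable then (\ref{equnobserv}) admits a nonzero polynomial solution, an unobservable impulse of some order $r$, and the bound forces $r\le n-1$; contraposing yields the stated equivalence.

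To prove the degree bound I would first read off the coefficient relations of (\ref{equnobserv}). Writing $P(s)=\sum_{i=0}^{r}s^{i}p_{i}$ with $p_{r}\neq 0$ and matching powers of $s$ in $(sE-A)P(s)=Ev$ gives $Ep_{r}=0$, the recursion $Ep_{i-1}=Ap_{i}$ for $1\le i\le r$, and the constant-term relation $-Ap_{0}=Ev$. Because the pencil $sE-A$ is regular, $\det(sE-A)$ has only finitely many roots, so I can choose a real $\alpha$ with $\det(\alpha E-A)\neq 0$ and perform the spectral shift $s=\sigma+\alpha$. This replaces $A$ by the invertible matrix $\hat{A}=A-\alpha E$, carries $P$ to a polynomial $\hat{P}(\sigma)=\sum_{i}\sigma^{i}\hat{p}_{i}$ of the same degree $r$ with unchanged leading coefficient $\hat{p}_{r}=p_{r}$, and leaves both the right-hand side $Ev$ and the output constraint $CP(s)=0$ intact. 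The purpose of the shift is that the modified recursion can now be inverted.

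Setting $N=\hat{A}^{-1}E$ and reversing the index by $q_{j}=\hat{p}_{r-j}$, the relations become $Nq_{j+1}=q_{j}$ together with $Nq_{0}=0$, so that $q_{j}=N^{\,r-j}q_{r}$ and hence $N^{\,r}q_{r}=q_{0}=p_{r}\neq 0$ while $N^{\,r+1}q_{r}=0$. Thus $q_{r},Nq_{r},\dots,N^{\,r}q_{r}$ is a Jordan chain of $N$ at the eigenvalue $0$, of length $r+1$, whose top vector is nonzero. A standard argument, applying successively decreasing powers of $N$ to a putative vanishing linear combination, shows these $r+1$ vectors are linearly independent in $\mathbb{R}^{n}$, forcing $r+1\le n$, i.e.\ $r\le n-1$.

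The main obstacle is precisely the possible non-invertibility of $A$: without it the recursion $Ep_{i-1}=Ap_{i}$ fails to determine the coefficient vectors and they need not be independent, so the chain-length argument cannot be run on the original data. The spectral shift is the device that removes this difficulty while preserving the degree, the leading coefficient, and the output condition; the remaining work is bookkeeping, namely verifying that the shift truly keeps $p_{r}\neq 0$ and the constant right-hand side, and establishing the independence of $\{N^{\,i}q_{r}\}$. It is worth noting that the output matrix $C$ plays no part in the bound itself; it only selects which orders actually arise, whereas the ceiling $n-1$ stems entirely from the structure of the pencil $sE-A$.
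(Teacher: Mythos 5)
Your proposal is correct, but it takes a noticeably different route from the paper. The paper disposes of this lemma in one sentence: it appeals to the regularity of the pencil $sE-A$ to assert that any polynomial solution of (\ref{equnobserv}) has degree at most $n-1$, leaving the mechanism behind that bound implicit (the standard justification runs through the Weierstrass form, where the polynomial part of $(sE-A)^{-1}$ is governed by a nilpotent block of index at most $n$). You instead prove the degree bound from scratch: the shift $s=\sigma+\alpha$ with $\det(\alpha E-A)\neq 0$ makes $\hat{A}=A-\alpha E$ invertible while preserving the degree, the leading coefficient $p_{r}$, and the right-hand side $Ev$; the coefficient recursion then inverts to $q_{j}=N^{\,r-j}q_{r}$ with $N=\hat{A}^{-1}E$, and the Jordan-chain independence argument forces $r+1\le n$. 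All steps check out, including the coefficient matching ($Ep_{r}=0$, $Ep_{i-1}=Ap_{i}$, $-Ap_{0}=Ev$) and the reduction of both directions of the equivalence to Theorem~\ref{Theoerempoly}. What your approach buys is a fully self-contained, elementary proof that uses only the original data $\{E,A,C\}$ plus a scalar shift --- arguably more in the spirit of the paper's stated ``intrinsic'' program than the paper's own one-line citation of regularity; what the paper's approach buys is brevity, at the cost of resting on an unproved (though standard) structural fact.
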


\begin{proof}
It follows from the regularity of the pencil $sE-A$ that the solution of (%
\ref{equnobserv}), if exists, will be of $\deg (P(s))\leq n-1.$
\end{proof}

\begin{theorem}
\label{Lemmatwo}Let $(v,$ $P(s))\in {\mathbb{R}}^{n}\times {\mathbb{R}}%
^{n}[s]$ be a solution of (\ref{equnobserv}) with $\deg (P(s))=r\geq 1$ and
write%
\begin{equation}
P(s)=p_{0}-sp_{1}+\cdots +(-s)^{r}p_{r}.  \label{Expanssion}
\end{equation}%
Then $(p_{r-1},$ $p_{r})\in {\mathbb{R}}^{n}\times {\mathbb{R}}^{n}[s]$ is a
solution of (\ref{equnobserv}) as well, where $p_{r}\in {\mathbb{R}}^{n}[s]$
is of $\deg (p_{r})=0$ as a polynomial vector.
\end{theorem}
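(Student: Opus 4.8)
The plan is to substitute the explicit expansion (\ref{Expanssion}) directly into the two block rows of (\ref{equnobserv}) and compare coefficients of like powers of $s$. Writing $P(s)=\sum_{i=0}^{r}(-s)^{i}p_{i}$, the bottom row $CP(s)=0$ is a polynomial identity in $s$, so each of its coefficients must vanish; in particular the leading coefficient gives $Cp_{r}=0$. This already supplies one of the two conditions needed for $(p_{r-1},p_{r})$ to solve (\ref{equnobserv}), so the work reduces to handling the top row.

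First I would treat the top row $(sE-A)P(s)=Ev$, whose right-hand side is the constant $Ev$. Multiplying out, the left-hand side is a polynomial of degree at most $r+1$ in $s$. Equating the coefficient of $s^{r+1}$ to zero (the right-hand side carries no such term) forces $Ep_{r}=0$, and equating the coefficient of $s^{r}$ to zero forces $Ep_{r-1}+Ap_{r}=0$; here the hypothesis $r\geq 1$ guarantees that the $s^{r}$ coefficient is genuinely distinct from the constant term $Ev$. These are the only two coefficient relations the statement requires.

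Next I would observe that the two relations $Ep_{r}=0$ and $Ep_{r-1}+Ap_{r}=0$ are exactly equivalent to the single polynomial identity $(sE-A)p_{r}=Ep_{r-1}$, obtained by matching its $s^{1}$ and $s^{0}$ coefficients: the $s^{1}$ term of $(sE-A)p_{r}$ is $Ep_{r}=0$, and the $s^{0}$ term is $-Ap_{r}=Ep_{r-1}$. Together with $Cp_{r}=0$ this says precisely that $(p_{r-1},p_{r})$, with the constant polynomial $p_{r}$ playing the role of $P(s)$, satisfies (\ref{equnobserv}). Since $\deg(P(s))=r$, the leading coefficient $p_{r}$ is nonzero, so the resulting solution is indeed nonzero and of degree $0$ as claimed.

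I do not expect any genuine obstacle here: the argument is a finite coefficient-matching computation. The only point demanding care is the sign bookkeeping, and the alternating-sign form (\ref{Expanssion}) appears to have been chosen precisely so that the top two coefficients collapse into the clean relation $(sE-A)p_{r}=Ep_{r-1}$ without stray signs; the remaining coefficient relations coming from $s^{1},\ldots,s^{r-1}$ play no role in this particular statement and can be ignored.
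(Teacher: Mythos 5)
Your proposal is correct and is in substance the same argument as the paper's: the paper substitutes the expansion (\ref{Expanssion}) into (\ref{equnobserv}) and differentiates the resulting polynomial identity $r$ times, which is just another device for extracting exactly the three relations you obtain by comparing coefficients of $s^{r+1}$, $s^{r}$ (top row) and the leading coefficient (bottom row), namely $Ep_{r}=0$, $Ep_{r-1}+Ap_{r}=0$, $Cp_{r}=0$. Both arguments then assemble these into $(sE-A)p_{r}=Ep_{r-1}$, $Cp_{r}=0$, so $(p_{r-1},p_{r})$ solves (\ref{equnobserv}), with $p_{r}\neq 0$ because $\deg(P(s))=r$.
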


\begin{proof}
Combining (\ref{equnobserv}) and (\ref{Expanssion}), we have%
\begin{equation*}
\sum_{i=0}^{r}\left[
\begin{array}{c}
s^{i+1}E-s^{i}A \\
s^{i}C%
\end{array}%
\right] (-1)^{i}p_{i}=\left[
\begin{array}{c}
Ev \\
0%
\end{array}%
\right] .
\end{equation*}%
Differentiating $r$ times gives%
\begin{equation*}
\left[
\begin{array}{c}
r!E-0 \\
0%
\end{array}%
\right] (-1)^{r-1}p_{r-1}+\left[
\begin{array}{c}
(r+1)!sE-r!A \\
r!C%
\end{array}%
\right] (-1)^{r}p_{r}=\left[
\begin{array}{c}
0 \\
0%
\end{array}%
\right]
\end{equation*}%
and further%
\begin{equation}
\left[
\begin{array}{c}
sE-A \\
C%
\end{array}%
\right] p_{r}=\left[
\begin{array}{c}
Ep_{r-1} \\
0%
\end{array}%
\right] .  \label{OrderReduc}
\end{equation}%
Note that $(r+1)sEp_{r}=0$ implies $Ep_{r}=0.$
\end{proof}

From an unobservable impulse $P(s)$ of order $r$ to initial value $v,$
Theorem \ref{Lemmatwo} constructs an unobservable one $p_{r}$ of order zero
to initial value $p_{r-1}$.

Form the following partitioned matrix%
\begin{equation}
{\mathcal{O}}_{k}(E,A,C)=\left[
\begin{array}{cccc}
E & A &  &  \\
& E & \ddots &  \\
&  & \ddots & A \\
&  &  & E \\
0 & C &  &  \\
& \ddots & \ddots &  \\
&  & 0 & C%
\end{array}%
\right]  \label{ImObserMatrixK}
\end{equation}%
for $k=2,\ldots ,n+1,$ where the other blocks not appearing are zero. It has
$k$ block columns and $2k-1$ block rows, and then has the size $%
(kn+(k-1)m)\times kn.$

\begin{theorem}
\label{ThRjieCri}The system (\ref{Sys3}) has no unobservable impulse of
order $\leq r$ if and only if
\begin{equation}
\limfunc{rank}({\mathcal{O}}_{r+2}(E,A,C))=n(r+1)+\limfunc{rank}(E).
\label{RjieRank}
\end{equation}
\end{theorem}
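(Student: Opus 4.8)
The plan is to expand equation (\ref{equnobserv}) coefficient-by-coefficient in powers of $s$ and then recognize the resulting linear system as the single statement that a certain stacked vector lies in $\ker\mathcal{O}_{r+2}(E,A,C)$. First I would substitute the expansion (\ref{Expanssion}), $P(s)=p_{0}-sp_{1}+\cdots+(-s)^{r}p_{r}$, into (\ref{equnobserv}) and match the coefficients of each power $s^{0},s^{1},\ldots,s^{r+1}$. The top block $(sE-A)P(s)=Ev$ yields $Ev+Ap_{0}=0$ (from $s^{0}$), $Ep_{j-1}+Ap_{j}=0$ for $j=1,\ldots,r$ (from $s^{j}$), and $Ep_{r}=0$ (from $s^{r+1}$); the bottom block $CP(s)=0$ yields $Cp_{i}=0$ for $i=0,\ldots,r$. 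The alternating signs built into (\ref{Expanssion}) are precisely what make these equations sign-free, and the last of them, $Ep_{r-1}+Ap_{r}=0$ together with $Ep_{r}=0$, is consistent with the order-reduction identity (\ref{OrderReduc}) already established.

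Next I would observe that this linear system is exactly the condition $\mathcal{O}_{r+2}(E,A,C)\,\xi=0$ for the stacked vector $\xi=(\xi_{1},\ldots,\xi_{r+2})^{\top}\in\mathbb{R}^{(r+2)n}$ under the identification $\xi_{1}=v$ and $\xi_{i+2}=p_{i}$ for $i=0,\ldots,r$. Reading off the $k=r+2$ block columns of (\ref{ImObserMatrixK}): the $E$--$A$ rows give $E\xi_{i}+A\xi_{i+1}=0$ for $i=1,\ldots,r+1$, the final $E$-row gives $E\xi_{r+2}=0$, and the $C$-rows give $C\xi_{i}=0$ for $i=2,\ldots,r+2$; these match the coefficient equations term for term. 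Hence the assignment $(v,p_{0},\ldots,p_{r})\mapsto\xi$ is a linear isomorphism from the space of pairs $(v,P(s))$ solving (\ref{equnobserv}) with $\deg(P(s))\leq r$ onto $\ker\mathcal{O}_{r+2}(E,A,C)$.

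The decisive step is to isolate the trivial solutions. The subspace $\{(v,0,\ldots,0):Ev=0\}$ always lies in $\ker\mathcal{O}_{r+2}(E,A,C)$ (the only nontrivial row to check is the first, which reads $Ev=0$) and has dimension $n-\limfunc{rank}(E)$; it corresponds precisely to $P(s)=0$. By Definition \ref{Defrorder} an unobservable impulse of order $\leq r$ requires $P(s)$ nonzero, i.e. $(p_{0},\ldots,p_{r})\neq 0$, i.e. $\xi$ outside this trivial subspace. Therefore the system has no unobservable impulse of order $\leq r$ if and only if $\ker\mathcal{O}_{r+2}(E,A,C)$ contains nothing beyond the trivial subspace, that is, $\dim\ker\mathcal{O}_{r+2}(E,A,C)=n-\limfunc{rank}(E)$.

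Finally, since $\mathcal{O}_{r+2}(E,A,C)$ has $(r+2)n$ columns, the rank--nullity theorem converts this dimension count into $\limfunc{rank}(\mathcal{O}_{r+2}(E,A,C))=(r+2)n-(n-\limfunc{rank}(E))=n(r+1)+\limfunc{rank}(E)$, which is exactly (\ref{RjieRank}). The main obstacle I anticipate is the bookkeeping in the first two steps: aligning the block-row indexing of (\ref{ImObserMatrixK}) with the coefficient equations, and making sure the trivial kernel is controlled as an honest inclusion of $\ker E$ (so the argument is an equality of subspaces, not merely a dimension coincidence). Once the kernel identification is secured, the remainder is routine linear algebra.
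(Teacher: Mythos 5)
Your proposal is correct and takes essentially the same route as the paper's own proof: expanding (\ref{equnobserv}) coefficient-by-coefficient into the equations $Ep_{i}+Ap_{i+1}=0$, $Cp_{i}=0$, recognizing them as the kernel condition (\ref{EqOhCN}) for ${\mathcal{O}}_{r+2}(E,A,C)$, identifying the trivial solutions $\{(v,0,\ldots,0):Ev=0\}$ with the case $P(s)=0$, and converting the resulting dimension count into (\ref{RjieRank}) via rank--nullity. The only difference is presentational: you spell out explicitly that the equality is one of subspaces (trivial kernel included in the full kernel) rather than a mere dimension coincidence, a point the paper handles implicitly.
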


\begin{proof}
Let $\deg (P(s))\leq r$ and write $P(s)$ $=$ $p_{0}-sp_{1}+\cdots
+(-s)^{r}p_{r}.$ Then (\ref{equnobserv}) is equivalent to the following $%
(r+2)+(r+1)=2r+3$ equations%
\begin{equation}
Ep_{i}+Ap_{i+1}=0,\text{ }i=-1,0,\ldots ,r,  \label{EqNpi}
\end{equation}%
\begin{equation}
Cp_{i}=0,\text{ }i=0,\ldots ,r  \label{EqCpi}
\end{equation}%
on $(p_{-1},p_{0},\cdots ,p_{r})\in {\mathbb{R}}^{n}\times \cdots \times {%
\mathbb{R}}^{n},$ where we denote $p_{-1}=v$ and $p_{r+1}=0$ for notation
convenience. The group of equations (\ref{EqNpi}) and (\ref{EqCpi}) can be
rewritten into the following matrix form%
\begin{equation}
{\mathcal{O}}_{r+2}(E,A,C)\left[
\begin{array}{c}
p_{-1} \\
p_{0} \\
\vdots \\
p_{r}%
\end{array}%
\right] =0.  \label{EqOhCN}
\end{equation}%
Therefore the system (\ref{Sys3}) has no $r$ order unobservable impulse if
and only if all solutions of (\ref{EqOhCN}) satisfy $p_{i}=0,$ $i=0,1,\ldots
,r,$ i.e.,
\begin{equation}
\left\{ \left[
\begin{array}{c}
p_{-1} \\
p_{0} \\
\vdots \\
p_{r}%
\end{array}%
\right] :{\mathcal{O}}_{r+2}(E,A,C)\left[
\begin{array}{c}
p_{-1} \\
p_{0} \\
\vdots \\
p_{r}%
\end{array}%
\right] =0\right\} =\left\{ \left[
\begin{array}{c}
p_{-1} \\
0 \\
\vdots \\
0%
\end{array}%
\right] :{\mathcal{O}}_{r+2}(E,A,C)\left[
\begin{array}{c}
p_{-1} \\
0 \\
\vdots \\
0%
\end{array}%
\right] =0\right\} .  \label{SETRELA}
\end{equation}%
It is easy to see that
\begin{equation*}
{\mathcal{O}}_{r+2}(E,A,C)\left[
\begin{array}{c}
p_{-1} \\
0 \\
\vdots \\
0%
\end{array}%
\right] =Ep_{-1}.
\end{equation*}%
Then by computing dimensions of solution spaces of linear equations, (\ref%
{SETRELA}) gives%
\begin{equation*}
n(r+2)-\func{rank}({\mathcal{O}}_{r+2}(E,A,C))=n-\func{rank}(E)
\end{equation*}%
and the result follows immediately.
\end{proof}

\begin{theorem}
\label{ThNewCri}The following statements are equivalent:

1). The system (\ref{Sys3}) is impulse observable;

2). $\limfunc{rank}({\mathcal{O}}_{r+2}(E,A,C))=n(r+1)+\limfunc{rank}(E)$
for some one $r\in \{0,1,\cdots ,n-1\};$

3). $\limfunc{rank}({\mathcal{O}}_{r+2}(E,A,C))=n(r+1)+\limfunc{rank}(E)$
for each one $r\in \{0,1,\cdots ,n-1\}.$
\end{theorem}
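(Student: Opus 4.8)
The plan is to route every statement through Theorem \ref{ThRjieCri}, which converts the algebraic rank condition into the purely dynamical assertion ``the system has no unobservable impulse of order $\leq r$,'' and then to exploit the order-reduction mechanism supplied by Theorem \ref{Lemmatwo}. For brevity I write $R(r)$ for the rank equality $\limfunc{rank}({\mathcal{O}}_{r+2}(E,A,C))=n(r+1)+\limfunc{rank}(E)$, so that statement 2) reads ``$R(r)$ for some $r\in\{0,\ldots,n-1\}$'' and statement 3) reads ``$R(r)$ for every such $r$.'' By Theorem \ref{ThRjieCri}, $R(r)$ holds exactly when the system admits no unobservable impulse of order $\leq r$, which is the translation I would use throughout.

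The crux of the argument, and the step I expect to carry the real content, is the observation that the truth value of $R(r)$ is in fact independent of $r$. Theorem \ref{Lemmatwo} shows that from any unobservable impulse of order $\geq 1$ one extracts an unobservable impulse of order $0$, namely the nonzero leading vector $p_{r}$ with initial state $p_{r-1}$. Reading this contrapositively, the absence of an order-$0$ unobservable impulse forces the absence of an unobservable impulse of every order $\geq 1$ as well. Hence for each $r\geq 0$ the condition ``no unobservable impulse of order $\leq r$'' is equivalent to ``no unobservable impulse of order $0$''; the reverse direction is immediate since an order-$0$ impulse is in particular of order $\leq r$, while the forward direction is exactly this collapse. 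Through Theorem \ref{ThRjieCri} this yields $R(r)\Leftrightarrow R(0)$ for all $r$, so the conditions $R(0),\ldots,R(n-1)$ stand or fall together.

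With this dichotomy in hand I would close the equivalences by a short cycle $3)\Rightarrow 2)\Rightarrow 1)\Rightarrow 3)$. The implication $3)\Rightarrow 2)$ is trivial. For $2)\Rightarrow 1)$, if $R(r)$ holds for some $r$ then $R(0)$ holds, i.e.\ there is no order-$0$ unobservable impulse, hence by the collapse none of any order, in particular none of order $\leq n-1$, and Lemma \ref{propnorder} delivers impulse observability. For $1)\Rightarrow 3)$, Lemma \ref{propnorder} gives that an impulse-observable system has no unobservable impulse of order $\leq n-1$; since the degree bound established in the proof of Lemma \ref{propnorder} (from regularity of the pencil) rules out anything of higher order, this means no unobservable impulse of any order, so Theorem \ref{ThRjieCri} makes every $R(r)$ with $r\in\{0,\ldots,n-1\}$ hold. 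The only genuinely substantive ingredient is the order-reduction collapse of the middle paragraph; the rest is bookkeeping, with the degree bound of Lemma \ref{propnorder} doing the auxiliary job of confining the relevant range of $r$ to $\{0,\ldots,n-1\}$ and letting the single threshold $r=n-1$ certify observability.
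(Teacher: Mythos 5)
Your proposal is correct and takes essentially the same route as the paper: the paper's own proof is just a one-line citation of Lemma \ref{propnorder}, Theorem \ref{Lemmatwo} and Theorem \ref{ThRjieCri}, and your argument supplies exactly the details that citation leaves implicit (translating the rank equality via Theorem \ref{ThRjieCri}, collapsing all orders to order $0$ via Theorem \ref{Lemmatwo}, and invoking Lemma \ref{propnorder} to connect with impulse observability). The write-up is sound and complete.
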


\begin{proof}
Follows from Lemma \ref{propnorder}, Theorems \ref{Lemmatwo} and \ref%
{ThRjieCri}.
\end{proof}

\begin{remark}
Up till now, all statements and proofs use only the original system data,
not involved in any transformation to the system.
\end{remark}

\section{Implying Existing Results}

First, taking $r=0$, the condition (\ref{RjieRank}) gives the following
consequence.

\begin{corollary}
\label{FirstNew}The system (\ref{Sys3}) is impulse observable, if and only
if $\limfunc{rank}({\mathcal{O}}_{2}(E,A,C))=n+\limfunc{rank}(E),$ i.e.,
\begin{equation}
\limfunc{rank}\left[
\begin{array}{cc}
E & A \\
0 & E \\
0 & C%
\end{array}%
\right] =n+\limfunc{rank}(E).  \label{Dai}
\end{equation}
\end{corollary}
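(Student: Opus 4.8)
The plan is to obtain Corollary \ref{FirstNew} as nothing more than the instantiation $r=0$ of Theorem \ref{ThNewCri}. Since $0$ belongs to the index set $\{0,1,\ldots,n-1\}$, both the existential statement 2) and the universal statement 3) of that theorem are available at $r=0$, and it is precisely these two statements that supply the two directions of the asserted biconditional. So the whole argument reduces to reading the general criterion at a single index, and the only content is bookkeeping.

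For sufficiency I would assume the displayed identity $\limfunc{rank}(\mathcal{O}_{2}(E,A,C))=n+\limfunc{rank}(E)$ and note that this is exactly the condition $\limfunc{rank}(\mathcal{O}_{r+2}(E,A,C))=n(r+1)+\limfunc{rank}(E)$ of statement 2) at the value $r=0$, since $n(0+1)+\limfunc{rank}(E)=n+\limfunc{rank}(E)$; because $r=0\in\{0,\ldots,n-1\}$, statement 2) is satisfied and impulse observability follows. For necessity I would run the implication in reverse through statement 3): if the system is impulse observable, then the rank identity holds for \emph{every} $r\in\{0,\ldots,n-1\}$, and specializing to $r=0$ yields $\limfunc{rank}(\mathcal{O}_{2}(E,A,C))=n+\limfunc{rank}(E)$, which is equation (\ref{Dai}).

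The remaining step is to confirm that $\mathcal{O}_{2}$ has the explicit block shape displayed in (\ref{Dai}). Setting $k=r+2=2$ in the definition (\ref{ImObserMatrixK}) gives a matrix with $k=2$ block columns and $2k-1=3$ block rows; reading off the bidiagonal $E$-$A$ template together with the shifted $C$ template produces the rows $[\,E\ A\,]$, $[\,0\ E\,]$, and $[\,0\ C\,]$, which is exactly the $3\times 2$ block matrix in (\ref{Dai}). I do not expect any genuine obstacle here: the argument is a direct substitution into the already-proved criterion, and the only points to watch are the index arithmetic $n(r+1)+\limfunc{rank}(E)\mapsto n+\limfunc{rank}(E)$ at $r=0$ and the correct matching of the matrix template at $k=2$.
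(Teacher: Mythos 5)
Your proposal is correct and is exactly the paper's own argument: the paper obtains Corollary \ref{FirstNew} by taking $r=0$ in Theorem \ref{ThNewCri} (equivalently in condition (\ref{RjieRank})), just as you do, using statement 2) for sufficiency and statement 3) for necessity. Your verification that ${\mathcal{O}}_{2}(E,A,C)$ has the block form $[\,E\ A\,;\,0\ E\,;\,0\ C\,]$ is also the correct reading of (\ref{ImObserMatrixK}) at $k=2$.
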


\begin{remark}
The condition (\ref{Dai}) is a well known criterion for impulse
observability \cite[Eq. (2-3.6), p. 44]{Dai1989}, which is featured by using
the original system data. To our knowledge, it is the only criterion of this
feature in literature. Existing proofs (see \cite[p. 44]{Dai1989}, \cite%
{Ishihara2001}, etc.) rely on some decompositions. Now we see that it
guarantees the nonexistence of unobservable impulse of zero order. In
quantitative aspect, equation%
\begin{equation*}
\left[
\begin{array}{cc}
E & A \\
0 & E \\
0 & C%
\end{array}%
\right] \left[
\begin{array}{c}
p_{-1} \\
p_{0}%
\end{array}%
\right] =0
\end{equation*}%
provides total information about unobservable impulsive initial states of
zero order.
\end{remark}

Now we consider the Weierstrass canonical decomposition%
\begin{equation}
\left[
\begin{array}{cc}
T & 0 \\
0 & I%
\end{array}%
\right] \left[
\begin{array}{c}
sE-A \\
C%
\end{array}%
\right] S=\left[
\begin{array}{cc}
sI_{n_{1}}-A_{1} & 0 \\
0 & sN-I_{n_{2}} \\
C_{1} & C_{2}%
\end{array}%
\right] ,  \label{Weiers}
\end{equation}%
where $T,$ $S\in {\mathbb{R}}^{n\times n}$ are invertible, $n_{1}+n_{2}=n$
and $N$ is nilpotent with index $h$ (i.e., $N^{h-1}\neq 0$, but $N^{h}=0$).

\begin{lemma}
\label{LemmaA}The condition (\ref{RjieRank}) holds if and only if
\begin{equation*}
\limfunc{rank}({\mathcal{O}}_{r+2}(N,I_{n_{2}},C_{2}))=n_{2}(r+1)+\limfunc{%
rank}(N).
\end{equation*}
\end{lemma}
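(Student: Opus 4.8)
The plan is to show that the rank condition in (\ref{RjieRank}) is invariant under the equivalence transformation (\ref{Weiers}), and then to read off the result from the block-diagonal structure of the Weierstrass form. The key observation is that the matrix ${\mathcal{O}}_{r+2}(E,A,C)$ is built block-by-block from the single pencil $[sE-A;\,C]$ repeated along a staircase pattern, so a transformation acting on that pencil should propagate to a transformation of the whole big matrix that is realized by block-diagonal invertible matrices on the left and right, hence rank-preserving.

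Concretely, first I would introduce the block-diagonal matrices $\mathrm{diag}(T,\ldots,T,I,\ldots,I)$ (with $r+2$ copies of $T$ accounting for the block rows carrying $sE-A$, and $r+1$ copies of $I$ for the block rows carrying $C$) on the left, and $\mathrm{diag}(S,\ldots,S)$ (with $r+2$ copies of $S$) on the right, and verify the identity
\begin{equation*}
\mathrm{diag}(T,\ldots,T,I,\ldots,I)\,{\mathcal{O}}_{r+2}(E,A,C)\,\mathrm{diag}(S,\ldots,S)={\mathcal{O}}_{r+2}(TES,TAS,CS).
\end{equation*}
This reduces to checking that each of the three nonzero block types $E$, $A$, $C$ transforms correctly, which is immediate from $T(sE-A)S=(sTES)-(TAS)$ and $CS$, i.e.\ $TES,\,TAS,\,CS$ are exactly the transformed pencil coefficients. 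Since the outer matrices are invertible, this gives $\func{rank}({\mathcal{O}}_{r+2}(E,A,C))=\func{rank}({\mathcal{O}}_{r+2}(TES,TAS,CS))$.

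Next I would use the Weierstrass decomposition (\ref{Weiers}) itself. After the transformation the pencil splits as a direct sum of the slow part $[sI_{n_1}-A_1;\,C_1]$ and the fast part $[sN-I_{n_2};\,C_2]$ (up to the column coupling through $C$, which I must handle carefully). The cleanest route is to observe that ${\mathcal{O}}_{r+2}$ of a block-diagonal pencil is itself, after a block permutation of rows and columns, the direct sum ${\mathcal{O}}_{r+2}(I_{n_1},A_1,C_1)\oplus{\mathcal{O}}_{r+2}(N,I_{n_2},C_2)$; hence its rank is the sum of the two ranks. The slow subsystem has $E=I_{n_1}$ nonsingular, so its pencil is always impulse observable, and a direct computation shows $\func{rank}({\mathcal{O}}_{r+2}(I_{n_1},A_1,C_1))=n_1(r+2)$ — full column rank for that block. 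Substituting this, together with $\func{rank}(E)=n_1+\func{rank}(N)$ (since $I_{n_1}$ is nonsingular and the transformation preserves rank), into the target equation (\ref{RjieRank}) and cancelling the common $n_1(r+2)$ terms leaves exactly the claimed condition $\func{rank}({\mathcal{O}}_{r+2}(N,I_{n_2},C_2))=n_2(r+1)+\func{rank}(N)$.

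The main obstacle I anticipate is bookkeeping rather than conceptual: I must be precise about the ordering of the $2(r+2)-1$ block rows and $r+2$ block columns of ${\mathcal{O}}_{r+2}$ so that the left/right transformations are genuinely block-diagonal (the $C$-block rows are offset relative to the $E,A$-block rows, so the left multiplier is not simply $\mathrm{diag}(T,I)$ applied uniformly but must respect the staircase layout), and I must justify the row-and-column permutation that turns the block-diagonal pencil's observability matrix into an honest direct sum. I would also verify the slow-block full-rank claim $\func{rank}({\mathcal{O}}_{r+2}(I,A_1,C_1))=n_1(r+2)$ explicitly, since it is the quantitative fact that makes the cancellation work; this follows because with $E=I_{n_1}$ the top block-row band $[I\ A_1;\ \ddots]$ alone already has full column rank $n_1(r+2)$.
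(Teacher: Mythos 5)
Your overall route --- transporting the rank condition through the Weierstrass transformation and then splitting off the fast subsystem --- is the natural one (the paper itself gives no details, calling the proof ``straightforward''), and most of your steps are correct: the transformation identity with your block counts, the equality $\func{rank}(E)=n_{1}+\func{rank}(N)$, and the final cancellation arithmetic. The genuine gap is the step ``hence its rank is the sum of the two ranks.'' The justification you propose --- that a block permutation of rows and columns turns ${\mathcal{O}}_{r+2}\bigl(\mathrm{diag}(I_{n_{1}},N),\,\mathrm{diag}(A_{1},I_{n_{2}}),\,[C_{1}\ C_{2}]\bigr)$ into the direct sum ${\mathcal{O}}_{r+2}(I_{n_{1}},A_{1},C_{1})\oplus {\mathcal{O}}_{r+2}(N,I_{n_{2}},C_{2})$ --- cannot be carried out: each $C$-type block row $[\,0\ \cdots \ 0\ \ C_{1}\ C_{2}\ \ 0\ \cdots \ 0\,]$ has entries in both the slow and the fast column groups, a permutation cannot split one row into two, and the row counts already disagree ($(r+2)n+(r+1)m$ for the original matrix versus $(r+2)n+2(r+1)m$ for the direct sum). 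What the permutations really give is
\begin{equation*}
\func{rank}\left( {\mathcal{O}}_{r+2}(E,A,C)\right) =\func{rank}(M),\qquad M=\left[
\begin{array}{cc}
U_{1} & 0 \\
0 & U_{2} \\
V_{1} & V_{2}
\end{array}
\right] ,
\end{equation*}
where $U_{1}$ is the $(I_{n_{1}},A_{1})$-staircase, $U_{2}$ the $(N,I_{n_{2}})$-staircase, $V_{1},V_{2}$ the $C_{1}$- and $C_{2}$-staircases, and $U_{2}$ stacked over $V_{2}$ is exactly ${\mathcal{O}}_{r+2}(N,I_{n_{2}},C_{2})$. Rank additivity for matrices of this bordered shape is false in general (take $U_{1}=U_{2}=0$, $V_{1}=V_{2}=I$: the rank is $n$, not $2n$), so it must be earned from the specific structure, not asserted.

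The repair is short and uses a fact you already wrote down, but placed in the wrong role. Your remark that the band $U_{1}$ has full column rank says more: $U_{1}$ is square of size $n_{1}(r+2)$, block upper triangular with identity diagonal blocks, hence invertible. Therefore the block row operation $[\,V_{1}\ \ V_{2}\,]\mapsto \lbrack \,V_{1}\ \ V_{2}\,]-V_{1}U_{1}^{-1}[\,U_{1}\ \ 0\,]$ annihilates $V_{1}$ and preserves rank, after which $M$ is genuinely block diagonal, giving $\func{rank}({\mathcal{O}}_{r+2}(E,A,C))=n_{1}(r+2)+\func{rank}({\mathcal{O}}_{r+2}(N,I_{n_{2}},C_{2}))$. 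Substituting this and $\func{rank}(E)=n_{1}+\func{rank}(N)$ into (\ref{RjieRank}) and cancelling $n_{1}(r+2)$ from both sides yields precisely the displayed condition. So the invertibility of the slow staircase is not merely, as you put it, ``the quantitative fact that makes the cancellation work''; it is what makes the additivity step true in the first place. With that substitution your proof closes.
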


\begin{proof}
Straightforward.
\end{proof}

\begin{lemma}
\label{LemmaB}%
\begin{equation*}
\limfunc{rank}({\mathcal{O}}_{r+2}(N,I_{n_{2}},C_{2}))=n_{2}(r+1)+\func{rank}%
\left[
\begin{array}{c}
N^{r+2} \\
C_{2}N \\
C_{2}N^{2} \\
\vdots \\
C_{2}N^{r+1}%
\end{array}%
\right] .
\end{equation*}
\end{lemma}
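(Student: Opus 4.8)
The plan is to compute the rank of $\mathcal{O}_{r+2}(N,I_{n_{2}},C_{2})$ by first describing its kernel explicitly and then converting the nullity into a rank via the rank--nullity theorem. The decisive structural feature, compared with the general matrix (\ref{ImObserMatrixK}), is that here the superdiagonal blocks equal the identity $I_{n_{2}}$ (because $A=I_{n_{2}}$); these invertible pivots turn the top block rows into a one-step recursion that expresses every component of a kernel vector in terms of a single free block. So I would aim to show that the kernel of $\mathcal{O}_{r+2}(N,I_{n_{2}},C_{2})$ is isomorphic, via projection onto its first block, to the kernel of the matrix $M$ displayed on the right-hand side of the asserted identity, and then read off the rank.

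First I would write out the homogeneous system $\mathcal{O}_{r+2}(N,I_{n_{2}},C_{2})[p_{-1},p_{0},\ldots ,p_{r}]^{\mathsf{T}}=0$ block row by block row, exactly as in the proof of Theorem \ref{ThRjieCri} but specialized to $E=N$, $A=I_{n_{2}}$, $C=C_{2}$. The top $r+2$ block rows then read $Np_{i-1}+p_{i}=0$ for $i=0,\ldots ,r$ together with the final relation $Np_{r}=0$, while the bottom $r+1$ block rows give $C_{2}p_{i}=0$ for $i=0,\ldots ,r$. Using the identity superdiagonal, the recursion $p_{i}=-Np_{i-1}$ solves to $p_{i}=(-1)^{i+1}N^{i+1}p_{-1}$ for each $i$, so that $p_{0},\ldots ,p_{r}$ are uniquely determined by $p_{-1}$.

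Next I would substitute this solution back into the two leftover families of constraints. The relation $Np_{r}=0$ becomes $N^{r+2}p_{-1}=0$, and each $C_{2}p_{i}=0$ becomes $C_{2}N^{i+1}p_{-1}=0$, i.e. $C_{2}N^{l}p_{-1}=0$ for $l=1,\ldots ,r+1$. These are precisely the block rows of $M$, so $p_{-1}\in \ker M$. Conversely any $p_{-1}\in \ker M$ extends, via the recursion, to a kernel vector of $\mathcal{O}_{r+2}(N,I_{n_{2}},C_{2})$. Hence the map $[p_{-1},\ldots ,p_{r}]^{\mathsf{T}}\mapsto p_{-1}$ is a linear isomorphism from $\ker \mathcal{O}_{r+2}(N,I_{n_{2}},C_{2})$ onto $\ker M$, giving
\begin{equation*}
\dim \ker \mathcal{O}_{r+2}(N,I_{n_{2}},C_{2})=\dim \ker M=n_{2}-\limfunc{rank}(M).
\end{equation*}
Since $\mathcal{O}_{r+2}(N,I_{n_{2}},C_{2})$ has $(r+2)n_{2}$ columns, rank--nullity yields $\limfunc{rank}(\mathcal{O}_{r+2}(N,I_{n_{2}},C_{2}))=(r+2)n_{2}-\bigl(n_{2}-\limfunc{rank}(M)\bigr)=n_{2}(r+1)+\limfunc{rank}(M)$, which is the claim.

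I do not expect a genuine obstacle here; the work is linear-algebraic and elementary once the recursion is exploited. The one delicate point is purely bookkeeping: keeping the index alignment straight, since the first block column corresponds to $p_{-1}$ rather than $p_{0}$, and verifying that the shifted placement of the $C_{2}$ blocks in (\ref{ImObserMatrixK}) produces exactly the family $C_{2}N,\ldots ,C_{2}N^{r+1}$ (powers starting at one, not zero) together with the single top block $N^{r+2}$, rather than an off-by-one list. It is also worth noting that because $M$ has $n_{2}$ columns, the equality $\dim \ker M=n_{2}-\limfunc{rank}(M)$ is immediate, so no separate analysis of $M$ is required.
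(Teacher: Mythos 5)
Your proof is correct, including the index bookkeeping you flag as the delicate point: the recursion gives $p_{i}=(-1)^{i+1}N^{i+1}p_{-1}$, the residual constraints are exactly $N^{r+2}p_{-1}=0$ and $C_{2}N^{l}p_{-1}=0$ for $l=1,\ldots ,r+1$, and rank--nullity then yields the claim. The paper reaches the same conclusion by a different formal device: it applies elementary row and column transformations to ${\mathcal{O}}_{r+2}(N,I_{n_{2}},C_{2})$, using the invertible superdiagonal blocks $I_{n_{2}}$ as pivots, to bring it to a block form having $r+1$ identity blocks on the superdiagonal, the blocks $N^{r+2}$ and $C_{2}N,\ldots ,C_{2}N^{r+1}$ stacked in the first block column, and zeros elsewhere; the rank is then read off directly as $n_{2}(r+1)+\mathrm{rank}(M)$, where $M$ denotes the stacked matrix in the statement. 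The two arguments are dual manifestations of the same elimination: solving the recursion $p_{i}=-Np_{i-1}$ on the kernel side is precisely what the paper's column and row operations do on the matrix side. What your route buys is self-contained rigor: the paper's proof is a one-line sketch that asserts the transformed form without exhibiting the operations, whereas every step of yours is verifiable, and it dovetails with the equation-by-equation style already used in the proof of Theorem \ref{ThRjieCri}. What the paper's route buys is brevity and visibility: once the transformed matrix is displayed, the rank formula is immediate, with no need for the kernel isomorphism or the rank--nullity theorem.
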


\begin{proof}
Through a series of elementary row and column transformations, matrix ${%
\mathcal{O}}_{r+2}(N,I_{n_{2}},C_{2})$ can be transformed into%
\begin{equation*}
\left[
\begin{array}{cccc}
0 & I_{n_{2}} &  &  \\
& 0 & \ddots &  \\
&  & \ddots & I_{n_{2}} \\
N^{r+2} &  &  & 0 \\
C_{2}N & 0 &  &  \\
\vdots &  & \ddots &  \\
C_{2}N^{r+1} &  &  & 0%
\end{array}%
\right]
\end{equation*}%
and the result follows.
\end{proof}

\begin{lemma}
\label{LemmaC}The condition (\ref{RjieRank}) holds if and only if
\begin{equation}
\func{rank}\left[
\begin{array}{c}
N^{r+2} \\
C_{2}N \\
C_{2}N^{2} \\
\vdots \\
C_{2}N^{r+1}%
\end{array}%
\right] =\func{rank}(N).  \label{FastRankCondi}
\end{equation}
\end{lemma}

\begin{proof}
Follows from Lemmas \ref{LemmaA} and \ref{LemmaB}.
\end{proof}

\begin{corollary}
\label{SecondNew}The following statements are equivalent:

1). The system (\ref{Sys3}) is impulse observable;

2). Condition (\ref{FastRankCondi}) holds for some one $r\in \{0,1,\ldots
,n-1\};$

3). Condition (\ref{FastRankCondi}) holds for each one $r\in \{0,1,\ldots
,n-1\}.$
\end{corollary}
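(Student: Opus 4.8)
The plan is to obtain Corollary \ref{SecondNew} directly from Theorem \ref{ThNewCri} by replacing, term by term, the rank condition (\ref{RjieRank}) with the equivalent condition (\ref{FastRankCondi}). The only machinery needed is Lemma \ref{LemmaC}, which asserts that for a single fixed $r$ the two conditions are equivalent, together with the trichotomy already proved in Theorem \ref{ThNewCri}.

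First I would fix an arbitrary $r\in\{0,1,\ldots,n-1\}$ and invoke Lemma \ref{LemmaC} at that $r$, recording the per-index equivalence that (\ref{RjieRank}) holds if and only if (\ref{FastRankCondi}) holds. This single-$r$ statement is the crucial ingredient, since it is precisely what permits both the existential (``some $r$'') and the universal (``each $r$'') quantifiers to be transported without change.

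Next I would match the three statements against those of Theorem \ref{ThNewCri}. Our statement 1) is identical to its statement 1), namely impulse observability. Applying Lemma \ref{LemmaC} at the witnessing index, our statement 2) (condition (\ref{FastRankCondi}) for some $r$) is equivalent to statement 2) of Theorem \ref{ThNewCri} (condition (\ref{RjieRank}) for some $r$); applying it uniformly over the range, our statement 3) (condition (\ref{FastRankCondi}) for each $r$) is equivalent to statement 3) of Theorem \ref{ThNewCri} (condition (\ref{RjieRank}) for each $r$). Because Theorem \ref{ThNewCri} already establishes that its three statements are mutually equivalent, this substitution yields the equivalence of the three statements asserted here.

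There is essentially no analytic obstacle; the proof is a transport of equivalences. The only point requiring care is that Lemma \ref{LemmaC} be read as holding for every individual $r$ in the admissible range rather than merely for the range collectively, so that the ``some'' and ``each'' quantifiers carry over faithfully. Since Lemma \ref{LemmaC} is stated for a generic fixed $r$, this is automatic, and the corollary follows at once.
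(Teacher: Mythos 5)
Your proposal is correct and follows exactly the paper's own route: the paper proves Corollary \ref{SecondNew} by combining Theorem \ref{ThNewCri} with the per-index equivalence of Lemma \ref{LemmaC}, which is precisely your substitution argument. The attention you give to reading Lemma \ref{LemmaC} as holding for each fixed $r$ (so the quantifiers transport faithfully) is the only subtlety, and you handle it correctly.
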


\begin{proof}
Follows from Theorem \ref{ThNewCri} and Lemma \ref{LemmaC} immediately.
\end{proof}

\begin{remark}
When $r\geq h-2$ with $h$ the nilpotency index of $N,$ the rank criterion (%
\ref{FastRankCondi}) gives the condition \cite[Theorem 2-3.4, (iv)]{Dai1989}%
; When $r=0$, it gives another condition \cite[Theorem 2-3.4, (v)]{Dai1989}.
We see that these two well known criteria are only two boundary cases of
Corollary \ref{SecondNew}.
\end{remark}

\end{document}